\newtheorem{Theorem}{\quad Theorem}[section]
\numberwithin{equation}{section}
\title{ \Large Finite difference/local discontinuous Galerkin method for solving the fractional diffusion-wave equation }
\author{{  Leilei Wei\footnote{Corresponding author. E-mail addresses:
leileiwei09@gmail.com.}
}\\
 \footnotesize  \emph{ College of Science, Henan University of Technology, Zhengzhou, Henan 450001, P.R. China}\\
 }
\date{}
\begin{document}
\maketitle

\newtheorem{lem}{Lemma}[section]
\newtheorem{thm}[lem]{Theorem}
\newtheorem{cor}[lem]{Corollary}
\newtheorem{remark}[lem]{Remark}
\newtheorem{pro}[lem]{Proposition}
\newtheorem{Def}[lem]{Definition}
\newtheorem{Alg}[lem]{Algorithm}

\theoremstyle{plain} \CJKtilde
\newcommand{\D}{\displaystyle}
\newcommand{\DF}[2]{\D\frac{#1}{#2}}
\par
{\small \noindent{\bfseries Abstract:} In this paper a finite difference/local discontinuous Galerkin method for the fractional diffusion-wave equation is presented and analyzed. We first propose a new finite difference method to approximate the time fractional derivatives, and give a semidiscrete scheme in time with the truncation error $O((\Delta t)^2)$, where $\Delta t$ is the time step size. Further we develop a fully discrete scheme for the fractional diffusion-wave equation, and prove that the method is unconditionally stable and convergent with order $O(h^{k+1}+(\Delta t)^{2})$, where $k$ is the degree of piecewise polynomial. Extensive numerical examples are carried out to confirm the theoretical convergence rates.

\par
{\noindent\bfseries Key words}: Fractional diffusion-wave equation; Time fractional derivative;
Local discontinuous Galerkin method; Stability.

\par
{\noindent\bfseries Mathematics Subject Classi?cation}: 65M12; 65M06; 35S10}
\section{Introduction}

Fractional calculus, which might be considered as an extension of classical calculus, attracts much attention in recent decades. Fractional order partial differential equations (FPDEs) have been frequently used to solve many scientific problems in various fields, such as quantitative finance, engineering, biology, chemistry, hydrology, and so on \cite{op07,rh00,ks06,rg08,IP99,jan1,jan2}.

However, analytical solutions for the majority of fractional partial differential equations, which are too complex and cannot expressed explicitly, are very difficult to be applied in the science and engineering, so it is a good choice to use numerical methods to finding numerical solutions for fractional partial differential equations, and has very important theoretical and practical significance. The existed methods solving the FPDEs include finite difference methods \cite{fd1,fd2,fd3,DC10,fd4,fd5,fd6,fd7,fd8,fd9,fd10,fd11,fd12,fd13}, finite element methods\cite{fe5,fe1,fe6,fe7,fe4,fe2,fe3}, spectral methods\cite{sp1,sp2,sp3}, discontinuous Gakerkin methods \cite{wei1,wei2}, homotopy perturbation method and the variational method \cite{ot5,ot1,ot2,ot3,ot4,ss07,zhang}.

In this paper we consider the following fractional diffusion-wave
equation
\begin{equation}\label{question}
\begin{split}
&\frac{\partial^{\alpha} u(x,t)}{\partial
t^{\alpha}}-\frac{\partial^{2}u(x,t)}{\partial x^{2}}=f(x,t),
~~~~~~(x,t)\in [a,b]\times [0,T],\\
&u(x,0)=u_0(x),~~~~\frac{\partial u(x,0)}{\partial
t}=u_1(x),~~x\in [a,b], \\
\end{split}
\end{equation}
where $1<\alpha< 2$ is a parameter describing the order of the
fractional time, $f, u_0, u_1$ are given smooth functions. We do not
pay attention to boundary condition in this paper; hence the
solution is considered to be either periodic or compactly supported.

The time fractional derivative in the equation (\ref{question}),
uses the Caputo fractional partial derivative of order $\alpha$,
defined as \cite{DC10}
\begin{equation}\label{time}\frac{\partial^{\alpha}u(x,t)}{\partial
t^{\alpha}}= \frac{1}{\Gamma(2-\alpha)}\int_0^t\frac{\partial^2
u(x,s)}{\partial s^2}\frac{ds}{(t-s)^{\alpha-1}},~t>0,~1<\alpha<2,
\end{equation}
where $\Gamma(\cdot)$ is the Gamma function.

The fractional diffusion-wave equation is obtained by replacing the
first- or second-order time derivative of the classical diffusion or wave equation with
a fractional derivative of order $1<\alpha< 2$, and can be used to interpolate the diffusion
equation and wave equation and model many of the
mechanical responses and acoustics accurately.

The rest of this paper is constructed as follows. In the section 2
some basic notations and theoretic results are introduced. Then in section 3 we construct
our finite difference/discontinuous Galerkin method for the fractional diffusion-wave equation, and stability and error analysis are given.  Numerical results are presented in section 4, and the concluding remarks is included in the
final section.

\section{Notations and auxiliary results}

In this section we introduce some notations and definitions that will be used later in
the following sections.

Let $\Omega=[a,b]$ be a finite domain, and a partition is given by
$$a=x_{\frac{1}{2}}<x_{\frac{3}{2}}<\cdots<x_{N+\frac{1}{2}}=b,$$
we denote the cell by $I_{j}=[x_{j-\frac{1}{2}},
x_{j+\frac{1}{2}}],$ for $j=1,\cdots N,$  and
the cell lengths $\Delta
x_j=x_{j+\frac{1}{2}}-x_{j-\frac{1}{2}},~1\leq j\leq N,$
$h=\max\limits_{1\leq j\leq N}\Delta x_j$.

We denote by $u_{j+\frac{1}{2}}^+$ and $u_{j+\frac{1}{2}}^-$ the
values of $u$ at $x_{j+1/2}$, from the right cell $I_{j+1}$ and from
the left cell $I_j$,respectively.

The piecewise-polynomial space $V_h^k$ is defined as the space of
polynomials of the degree up to $k$ in each cell $I_j$, i.e.
\begin{equation}
V_h^k=\{v:v\in P^k(I_j),x\in I_j,j=1,2,\cdots N\}\nonumber.
\end{equation}

For error estimates, we will be using two projections in one
dimension $[a,b]$, denoted by $\mathcal{P}$, i.e., for each $j$,
\begin{eqnarray}\label{pro1}
&&\int_{I_j}(\mathcal{P}\omega(x)-\omega(x))v(x)=0,\forall v\in
P^k(I_j),
\end{eqnarray}

and special projection $\mathcal{P}^\pm$, i.e., for each $j$,
\begin{eqnarray}\label{pro2}
\slabel{eq0}
&&\int_{I_j}(\mathcal{P}^+\omega(x)-\omega(x))v(x)=0,\forall v\in
P^{k-1}(I_j),\nonumber\\
 \mbox{and}
 \slabel{eq1}
&&\mathcal{P}^+\omega(x_{j-\frac{1}{2}}^+)=\omega(x_{j-\frac{1}{2}})\nonumber\\
\slabel{eq2}
&&\int_{I_j}(\mathcal{P}^-\omega(x)-\omega(x))v(x)=0,\forall v\in
P^{k-1}(I_j), \nonumber\\
\mbox{and}\slabel{eq3}
&&\mathcal{P}^-\omega(x_{j+\frac{1}{2}}^-)=\omega(x_{j+\frac{1}{2}}).
\end{eqnarray}

For the above projections $\mathcal{P}$ and $\mathcal{P}^\pm$, we have \cite{cs982,lx,xia,xushu08}
\begin{eqnarray}\label{projection111}
&&\|\omega^e\|+h\|\omega^e\|_{\infty}+h^{\frac{1}{2}}\|\omega^e\|_{\tau_h}\leq
Ch^{k+1},
\end{eqnarray}
where $\omega^e=\mathcal{P}\omega-\omega$ or
$\omega^e=\mathcal{P}^\pm\omega-\omega$.

The notations are used: the scalar inner product
on $L^2(D)$ be denoted by $(\cdot,\cdot)_{D}$, and the associated
norm by $\|\cdot\|_{D}$. If $D=\Omega$, we drop $D$.
In the present paper we use $C$ to denote a positive constant which
may have a different value in each occurrence.

\section{The schemes}
In this section, we first present a finite difference method to approximate the time fractional derivatives, and then give the implicit fully discrete scheme with space discretized by the local discontinuous Galerkin method. Stability and convergence are detailed analysis.
\subsection{Time fractional derivative discretization}
We divide the interval $[0,T]$ uniformly with a time step size $\Delta t=T/M$,  $M\in \mathds{N}$, $t_n=n\Delta t, n=0,1,\cdots,M$
be the mesh points.

Let $v(x,t)=\frac{\partial u(x,t)}{\partial t}$, and from the fact
$$v(x,t_i)=\frac{\partial u(x,t_i)}{\partial t}=\frac{3 u(x,t_i)-4 u(x,t_{i-1})+ u(x,t_{i-2})}{2\Delta t}+r^n_1,$$
where the truncation error $|r^n_1|\leq C (\Delta t)^2$, we can obtain
\begin{equation}\label{tf1}
\aligned \frac{\partial^{\alpha}u(x,t_n)}{\partial
t^{\alpha}}&=\frac{1}{\Gamma(2-\alpha)}\int_0^{t_n}\frac{\partial
v(x,s)}{\partial s}\frac{ds}{(t_n-s)^{\alpha-1}}\\
&=\frac{1}{\Gamma(2-\alpha)}\sum\limits_{i=0}^{n-1}\int_{t_i}^{t_{i+1}}\frac{\partial
v(x,s)}{\partial s}\frac{ds}{(t_n-s)^{\alpha-1}}\\
&=\frac{1}{\Gamma(2-\alpha)}\sum\limits_{i=0}^{n-1}\int_{t_i}^{t_{i+1}}\frac{v(x,t_{i+1})-v(x,t_{i})}{\Delta
t}\frac{ds}{(t_n-s)^{\alpha-1}}+r^n_2\\
&=\frac{(\Delta
t)^{2-\alpha}}{\Gamma(3-\alpha)}\sum\limits_{i=0}^{n-1}b_{n-i-1}\frac{v(x,t_{i+1})-v(x,t_{i})}{\Delta
t}+r^n_2\\
&=\frac{(\Delta
t)^{1-\alpha}}{\Gamma(3-\alpha)}[v(x,t_n)+\sum\limits_{i=1}^{n-1}(b_{n-i}-b_{n-i-1})v(x,t_i)-b_{n-1}v(x,t_0)]+r^n_2\\ &=\frac{(\Delta
t)^{1-\alpha}}{\Gamma(3-\alpha)}[\frac{3 u(x,t_n)-4 u(x,t_{n-1})+ u(x,t_{n-2})}{2\Delta t}\\
&~~~~~+\sum\limits_{i=1}^{n-1}(b_{n-i}-b_{n-i-1})\frac{3 u(x,t_i)-4 u(x,t_{i-1})+ u(x,t_{i-2})}{2\Delta t}\\
&~~~~~-b_{n-1}v(x,t_0)]+r^n_3, \endaligned
\end{equation}

where $$b_0=1,~~~~b_i=(i+1)^{2-\alpha}-i^{2-\alpha},i=1,2,3,\cdots$$ when $i=1$, we take
$u(x,-1)=u(x,0)-\Delta tu_1(x)+C(\Delta t)^2$ by Taylor expansion.

Similar to the proof in \cite{sp3},the truncation error $|r_2^n|\leq C (\Delta t)^{3-\alpha}$ , so
$r_3^n$ satisfied
$$|r_3^n|\leq C (\Delta t)^{3-\alpha}.$$

It is easy to check that
\begin{equation}\label{b}
\begin{split}
&b_i>0, i=1,2\cdots, n. \\
&1=b_0>b_1>b_2>\cdots >b_n, b_n\rightarrow 0(n\rightarrow\infty).
\end{split}
\end{equation}

Substituting (\ref{tf1}) into (\ref{question}), we have
\begin{equation*}
\aligned 3 u(x,t_n)-\beta\frac{\partial^{2}(x,t_n)}{\partial x^{2}}=&\sum\limits_{i=1}^{n-1}(b_{n-i-1}-b_{n-i})(3 (x,t_i)-4 u(x,t_{i-1})+ u(x,t_{i-2}))\\
&+2\Delta t b_{n-1}v(x,t_0)+\beta f(x,t_n)+4u(x,t_{n-1})\\
&-u(x,t_{n-2})+\beta r_3^n, \endaligned
\end{equation*}

where $\beta=2(\Delta t)^{\alpha} \Gamma(3-\alpha)$.

Let $u^k$ be the numerical approximation to $u(x,t_k)$, $f^n=f(x,t_n)$, the problem (\ref{question})
can be discretized by the following scheme
\begin{equation}\label{schemetime}
\aligned 3 u^n-\beta\frac{\partial^{2}u^n}{\partial x^{2}}=&\sum\limits_{i=1}^{n-1}(b_{n-i-1}-b_{n-i})(3 u^i-4 u^{i-1}+ u^{i-2})\\
&+2\Delta t b_{n-1}v^0+\beta f^n+4 u^{n-1}- u^{n-2}, \endaligned
\end{equation}
where $u^{-1}=u^0-\Delta tu_1(x).$ We know
$$|\beta r_3^n|\leq C(\Delta t)^3,$$
however, by Taylor expansion we have
$$|u(x,t_{-1})-u(x,0)+\Delta tu_1(x)|\leq C(\Delta t)^{2}, $$

therefore the truncation error is $O(\Delta t)^2$ in scheme (\ref{schemetime}).

\subsection{Fully discrete schemes}
In this subsection we present the fully discrete LDG scheme for the problem (\ref{question}) based on the semidiscrete scheme (\ref{schemetime}).

We rewrite Eq. (\ref{question}) as a first-order system:
\begin{eqnarray}\label{eq3}
&&p=u_x,~~~~~~\frac{\partial^{\alpha} u(x,t)}{\partial
t^{\alpha}}-p_x=f(x,t).
\end{eqnarray}

Let $u_h^n, p_h^n\in V_h^k$ be the approximations of $u(\cdot,t_n),
p(\cdot,t_n)$, respectively, $f^n(x)=f(x,t_n)$. We define a fully discrete local discontinuous
Galerkin scheme as follows: find $u_h^n, p_h^n\in V_h^k,$ such that
for all test functions $\phi,w\in V_h^k,$
\begin{equation}\label{scheme}
\begin{split}
3\int_{\Omega}u_h^n\phi dx&+\beta(\int_{\Omega}p_h^n\phi_xdx-\sum\limits_{j=1}^{N}((\widehat{p_h^n}\phi^-)_{j+\frac{1}{2}}-(\widehat{p_h^n}\phi^+)_{j-\frac{1}{2}}))\\
=&\sum\limits_{i=1}^{n-1}(b_{n-i-1}-b_{n-i})\int_{\Omega}(3u_h^{i}-4u_h^{i-1}+u_h^{i-2})\phi dx+2\Delta t b_{n-1}\int_{\Omega}v_h^{0}\phi dx\\
&+4\int_{\Omega}u_h^{n-1}\phi dx-\int_{\Omega}u_h^{n-2}\phi dx+\beta\int_{\Omega}f^n\phi dx,\\
\int_{\Omega}p_h^nwdx&+\int_{\Omega}u_h^nw_xdx-\sum\limits_{j=1}^{N}((\widehat{u_h^n}w^-)_{j+\frac{1}{2}}-(\widehat{u_h^n}w^+)_{j-\frac{1}{2}})=0,
\end{split}
\end{equation}
The initial conditions $u_h^{-1}, u_h^0, v_h^0$ are taken as the $L^2$ projections of
$u(¡¤,-1), u(¡¤,0), u_1(¡¤,0)$, respectively,
\begin{equation}
\begin{split}
\int_{\Omega}u_h^{-1}\phi dx&=\int_{\Omega}\mathcal{P}u(x,-1)\phi dx=\int_{\Omega}u(x,-1)\phi dx,\\
\int_{\Omega}u_h^{0}\phi dx&=\int_{\Omega}\mathcal{P}u(x,0)\phi dx=\int_{\Omega}u_0(x)\phi dx,\\
\int_{\Omega}v_h^{0}\phi dx&=\int_{\Omega}\mathcal{P}u_1(x,0)\phi dx=\int_{\Omega}u_1(x)\phi dx,
~~~\forall v\in V_h^k.
\end{split}
\end{equation}

The ``hat" terms in (\ref{scheme}) in the cell boundary terms from
integration by parts are the so-called ``numerical fluxes", which
are single valued functions defined on the edges and should be
designed based on different guiding principles for different PDEs to
ensure stability. It turns out that we can take the simple choices
such that
\begin{eqnarray}\label{flux1}
&&\widehat{u_h^n}=(u_h^n)^-,~~\widehat{p_h^n}=(p_h^n)^+.
\end{eqnarray}

We remark that the choice for the fluxes (\ref{flux1}) is not
unique. In fact the crucial part is taking $\widehat{u_h^n}$ and
$\widehat{p_h^n}$ from opposite sides \cite{xia,cs982}.

\subsection{Stability and Convergence}
In order to simplify the notations and without lose of generality,
we consider the case $f=0$ in its numerical analysis.

\begin{Theorem}\label{sta} For periodic or compactly supported boundary conditions,
the fully-discrete LDG scheme (\ref{scheme}) is unconditionally
stable, and there exists a positive constant $C$ depending on
$u,T,\alpha$, such that
\begin{equation}
\begin{split}
\|u_h^{n}\|\leq C(\|u_h^{0}\|+\Delta t\|u_1(x)\|), ~~~~~n=1,2\cdots,
M.
\end{split}\end{equation}
\end{Theorem}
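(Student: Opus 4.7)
The plan is to run the standard LDG energy argument in space and combine it with an induction (equivalently, a discrete Gronwall inequality) in time to handle the memory contributions from the fractional derivative.

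First, in (\ref{scheme}) I would test with $\phi = u_h^n$ in the first equation and $w = \beta p_h^n$ in the second, and add the two. The sum $\beta(p_h^n,(u_h^n)_x) + \beta(u_h^n,(p_h^n)_x)$ becomes, cell by cell, a collection of interface values; under periodic or compactly supported boundary conditions, these combine with the numerical-flux terms via the pointwise identity at each $x_{j+1/2}$
\begin{equation*}
(u^-p^- - u^+p^+) - p^+(u^- - u^+) - u^-(p^- - p^+) = 0,
\end{equation*}
which is precisely the reason for the alternating-side flux choice (\ref{flux1}). What remains is the energy identity
\begin{equation*}
\begin{split}
3\|u_h^n\|^2 + \beta\|p_h^n\|^2 = &\sum_{i=1}^{n-1}(b_{n-i-1}-b_{n-i})\bigl(3u_h^i - 4u_h^{i-1} + u_h^{i-2},\, u_h^n\bigr)\\
&+ 2\Delta t\, b_{n-1}(v_h^0, u_h^n) + 4(u_h^{n-1}, u_h^n) - (u_h^{n-2}, u_h^n).
\end{split}
\end{equation*}

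Next I would bound every inner product on the right by Cauchy--Schwarz followed by Young's inequality $|(u_h^i,u_h^n)| \le \tfrac{\varepsilon}{2}\|u_h^n\|^2 + \tfrac{1}{2\varepsilon}\|u_h^i\|^2$. The structural ingredient that makes this go through is the telescoping $\sum_{i=1}^{n-1}(b_{n-i-1}-b_{n-i}) = 1 - b_{n-1} \le 1$, which follows from the positivity and monotonicity properties (\ref{b}) of the $b_i$; this keeps the accumulated coefficient of $\|u_h^n\|^2$ on the right uniformly bounded in $n$. Choosing $\varepsilon$ small enough (depending only on the fixed stencil weights) then lets that part be absorbed into the $3\|u_h^n\|^2$ on the left, producing an estimate of the form
\begin{equation*}
\|u_h^n\|^2 \le C_1 \sum_{i=0}^{n-1}\|u_h^i\|^2 + C_2 (\Delta t)^2 \|u_1\|^2,
\end{equation*}
with $C_1,C_2$ depending only on $\alpha$. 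A discrete Gronwall inequality together with $n\Delta t \le T$ then delivers the advertised bound $\|u_h^n\| \le C(\|u_h^0\| + \Delta t\|u_1\|)$ with $C = C(u,T,\alpha)$.

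The flux cancellation is routine for LDG with an alternating flux choice, so the real work is the time-memory bookkeeping in the last step. The subtlety is that a naive induction of the form ``$\|u_h^i\|\le M$ for all $i<n$ implies $\|u_h^n\|\le M$'' fails, since the stencil $3u_h^i - 4u_h^{i-1} + u_h^{i-2}$ contributes an $O(1)$, rather than small, multiple of $M$ to the right-hand side. One must instead exploit the telescoping of the $b_{n-i-1}-b_{n-i}$ weights together with Gronwall-style exponentiation over the finite interval $[0,T]$, which absorbs the loss into the final constant $C(u,T,\alpha)$.
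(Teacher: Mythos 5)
Your energy identity and flux cancellation are exactly the paper's computation: testing with $\phi=u_h^n$, $w=\beta p_h^n$ and using the alternating fluxes (\ref{flux1}) to annihilate the interface terms reproduces (\ref{sta2}) with $\Theta=0$. The gap is in the time-stepping part. After Young's inequality you arrive at
\begin{equation*}
\|u_h^n\|^2 \le C_1\sum_{i=0}^{n-1}\|u_h^i\|^2 + C_2(\Delta t)^2\|u_1\|^2
\end{equation*}
with $C_1$ depending only on $\alpha$ --- crucially, \emph{not} carrying a factor of $\Delta t$. Discrete Gronwall applied to such a recursion yields $\|u_h^n\|^2\le (1+C_1)^n(\cdots)$, and since $n$ runs up to $M=T/\Delta t$, the factor $(1+C_1)^{T/\Delta t}$ blows up as $\Delta t\to 0$. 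The phrase ``together with $n\Delta t\le T$'' is exactly where this fails: that device requires the coefficient of the history sum to be $O(\Delta t)$, which it is not. The telescoping $\sum_{i=1}^{n-1}(b_{n-i-1}-b_{n-i})=1-b_{n-1}\le 1$ does not save you: even discarding the memory entirely, the local terms $4(u_h^{n-1},u_h^n)-(u_h^{n-2},u_h^n)$ already carry total weight $5>3$ against the $3\|u_h^n\|^2$ on the left, so no choice of $\varepsilon$ makes the per-step amplification factor at most one.

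You are right that the naive uniform-constant induction fails --- but note that this is precisely what the paper does: it assumes $\|u_h^m\|\le C(\|u_h^0\|+\Delta t\|u_1\|)$ for $m\le K$ and asserts the \emph{same} bound with the \emph{same} $C$ at $K+1$, which does not follow from (\ref{staend}) since the right-hand-side coefficients sum to $(13-8b_K)/3>1$ times the inductive bound. So your diagnosis of the difficulty is sharper than the paper's treatment of it, but your proposed repair does not close the argument either. A correct proof must retain the sign structure of the BDF2 stencil rather than applying the triangle inequality to $3u_h^i-4u_h^{i-1}+u_h^{i-2}$; for instance, a G-stability identity such as $2(3a-4b+c)a=a^2-b^2+(2a-b)^2-(2b-c)^2+(a-2b+c)^2$ produces telescoping energies, after which the positivity and monotonicity of the $b_i$ in (\ref{b}) can control the memory sum. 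As written, neither your Gronwall route nor the paper's induction establishes the $\Delta t$-uniform constant claimed in the theorem.
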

\begin{proof} Taking $\phi=u^n_h, w=\beta p^n_h$ in
scheme (\ref{scheme}), we
obtain
\begin{equation*}
\begin{split}
3&\|u_h^n\|^2+\beta\|p_h^n\|^2+\beta\sum\limits_{j=1}^{N}(\Psi(u_h^n,p_h^n)_{j+\frac{1}{2}}
-\Psi(u_h^n,p_h^n)_{j-\frac{1}{2}}+\Theta(u_h^n,p_h^n)_{j-\frac{1}{2}})\\
\end{split}
\end{equation*}
\begin{equation}\label{sta2}
\begin{split}=&\sum\limits_{i=1}^{n-1}(b_{n-i-1}-b_{n-i})\int_{\Omega}(3u_h^{i}-4u_h^{i-1}+u_h^{i-2})u_h^ndx+2\Delta tb_{n-1}\int_{\Omega}v_h^{0}u_h^ndx\\
&+4\int_{\Omega}u_h^{n-1}u_h^ndx-\int_{\Omega}u_h^{n-2}u_h^ndx,\\
\end{split}
\end{equation}
where
\begin{equation*}
\begin{split}
\Psi(u_h^n,p_h^n)=&(p_h^n)^-(u_h^n)^--\widehat{p_h^n}(u_h^n)^--\widehat{u_h^n}(p_h^n)^-,\\
\Theta(u_h^n,p_h^n)
=&(p_h^n)^-(u_h^n)^--(p_h^n)^+(u_h^n)^+-\widehat{p_h^n}(u_h^n)^-+\widehat{p_h^n}(u_h^n)^+
-\widehat{u_h^n}(p_h^n)^-\\
&+\widehat{u_h^n}(p_h^n)^+.
\end{split}
\end{equation*}

If we take the fluxes (\ref{flux1}), after some manual calculation,
we can easily obtain $\Theta(u_h^n,p_h^n)=0.$

Then based on the equation (\ref{sta2}), we can get
\begin{equation*}
\begin{split}
3\|u_h^n\|^2+\beta\|p_h^n\|^2
=&\sum\limits_{i=1}^{n-1}(b_{n-i-1}-b_{n-i})\int_{\Omega}(3u_h^{i}-4u_h^{i-1}+u_h^{i-2})u_h^ndx\\
&+2\Delta tb_{n-1}\int_{\Omega}v_h^{0}u_h^ndx
+4\int_{\Omega}u_h^{n-1}u_h^ndx\\
&-\int_{\Omega}u_h^{n-2}u_h^ndx\\
\leq&\sum\limits_{i=1}^{n-1}(b_{n-i-1}-b_{n-i})(3\|u_h^{i}\|+4\|u_h^{i-1}\|+\|u_h^{i-2}\|)\|u_h^n\|\\
&+2\Delta tb_{n-1}\|v_h^{0}\|\|u_h^n\|
+4\|u_h^{n-1}\|\|u_h^n\|\\
&+\|u_h^{n-2}\|\|u_h^n\|,
\end{split}\end{equation*}
that is
\begin{equation}\label{staend}
\begin{split}
3\|u_h^n\|\leq&\sum\limits_{i=1}^{n-1}(b_{n-i-1}-b_{n-i})(3\|u_h^{i}\|+4\|u_h^{i-1}\|+\|u_h^{i-2}\|)\\
&+2\Delta tb_{n-1}\|v_h^{0}\|
+4\|u_h^{n-1}\|\\
&+\|u_h^{n-2}\|.
\end{split}\end{equation}

We will prove the Theorem \ref{sta}  by mathematical induction. When
$n=1$, we can obtain
\begin{equation}\label{stan=1}
\begin{split}
3\|u_h^1\|\leq 2\Delta t \|v_h^{0}\|
+4\|u_h^{0}\|
+\|u_h^{-1}\|\\
\end{split}
\end{equation}

Notice that
$$
\int_{I_j}u_h^{-1}vdx=\int_{I_j}\mathbb{P}(u(x,0)-\Delta t
u_1(x))vdx=\int_{I_j}u_h^0vdx-\Delta t \int_{I_j}u_1(x)vdx,
$$
for any $v\in V_h^k$. Taking $v=u_h^{-1}$, we can obtain
\begin{equation*}
\begin{split}
\|u_h^{-1}\|_{I_j}^2&=\int_{I_j}u_h^0u_h^{-1}dx-\Delta t
\int_{I_j}u_1(x)u_h^{-1}dx\\
&\leq \|u_h^{0}\|_{I_j}^2+\frac{1}{4}\|u_h^{-1}\|_{I_j}^2+(\Delta
t)^2\|u_1(x)\|_{I_j}^2+\frac{1}{4}\|u_h^{-1}\|_{I_j}^2,
\end{split}\end{equation*}
summing over $j$ from $1$ to $N$, we can get
\begin{equation}\label{sta-1}
\begin{split}
\|u_h^{-1}\| &\leq C(\|u_h^{0}\|+\Delta t\|u_1(x)\|).
\end{split}\end{equation}

Similar to the proof of (\ref{sta-1}), we can easily obtain
\begin{equation}\label{sta-2}
\|v_h^{0}\| \leq \|u_1(x)\|.
\end{equation}
By using (\ref{stan=1}),(\ref{sta-1}) and (\ref{sta-2}), it is easily to know that there exists a
positive constant $C$, such that
\begin{equation}\label{lastn=1}
\begin{split}
\|u_h^{1}\|&\leq C(\|u_h^{0}\|+\Delta t\|u_1(x)\|).
\end{split}\end{equation}

Now suppose the following inequality holds
\begin{equation}\label{assump1}
\begin{split}
\|u_h^m\|\leq C(\|u_h^{0}\|+\Delta t\|u_1(x)\|), m=2,3\cdots K,
\end{split}\end{equation}
we need to prove $\|u_h^{K+1}\|\leq C(\|u_h^{0}\|+\Delta
t\|u_1(x)\|).$

Let $n=K+1$ in the inequality (\ref{staend}), we can
obtain
\begin{equation*}
\begin{split}
3\|u_h^{K+1}\|\leq&\sum\limits_{i=1}^{K}(b_{K-i}-b_{K+1-i})(3\|u_h^{i}\|+4\|u_h^{i-1}\|+\|u_h^{i-2}\|)\\
&+2\Delta tb_{K}\|v_h^{0}\|
+4\|u_h^{K}\|\\
&+\|u_h^{K-1}\|.\end{split}
\end{equation*}

Using (\ref{sta-1}), (\ref{sta-2}) and (\ref{assump1}),
we can obtain the following inequality easily
$$\|u_h^{K+1}\|\leq C(\|u_h^{0}\|+\Delta t\|u_1(x)\|).$$

This finishes the proof of the stability result.
\end{proof}
\begin{Theorem}\label{errorresult} Let $u(x,t_n)$ be the exact solution of
problem (\ref{question}), which is sufficiently smooth such that
$u\in H^{m+1}$ with $0\leq m\leq k+1$. Let $u_h^n$ be the numerical
solution of the fully discrete LDG scheme (\ref{scheme}), then there
holds the following error estimate:
\begin{equation}\label{err}
\begin{split}
\|u(x,t_n)-u_h^n\|\leq C(h^{k+1}+(\Delta t)^{2}), n=1,\cdots,M,
\end{split}
\end{equation}
where C is a constant depending on $u, T, \alpha$.
\end{Theorem}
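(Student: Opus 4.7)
The plan is to mirror the stability argument of Theorem \ref{sta} but applied to the error, using the standard LDG projection decomposition to absorb the spatial discretization error into the projection residual. First I would introduce $\eta_u^n = \mathcal{P}^-u(x,t_n)-u(x,t_n)$, $\xi_u^n = u_h^n-\mathcal{P}^-u(x,t_n)$, and analogously $\eta_p^n = \mathcal{P}^+p(x,t_n)-p(x,t_n)$, $\xi_p^n = p_h^n-\mathcal{P}^+p(x,t_n)$. The opposite-side pairing (projection $\mathcal{P}^-$ for $u$ and $\mathcal{P}^+$ for $p$) is designed to match the numerical fluxes (\ref{flux1}), so that $\eta_u^n$ and $\eta_p^n$ make no contribution at the cell interfaces; only their interior $L^2$ contributions survive, and those are $O(h^{k+1})$ by (\ref{projection111}).

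The next step is to form the error equation. The exact solution, substituted into the weak form of (\ref{scheme}), satisfies the same relation up to the time-truncation residual $\beta r_3^n$ with $|\beta r_3^n|\le C(\Delta t)^3$; subtracting gives an identity for $(\xi_u^n,\xi_p^n)$ of the same structure as (\ref{scheme}) but with source terms coming from $\eta_u,\eta_p$ and from $\beta r_3^n$. Choosing test functions $\phi=\xi_u^n$ and $w=\beta\xi_p^n$, reusing the algebraic fact $\Theta(\xi_u^n,\xi_p^n)=0$ from the stability proof, and estimating the projection contributions by Cauchy--Schwarz together with (\ref{projection111}) yields the analogue of (\ref{staend}):
\begin{equation*}
3\|\xi_u^n\|\le \sum_{i=1}^{n-1}(b_{n-i-1}-b_{n-i})(3\|\xi_u^i\|+4\|\xi_u^{i-1}\|+\|\xi_u^{i-2}\|)+2\Delta t\,b_{n-1}\|\xi_v^0\|+4\|\xi_u^{n-1}\|+\|\xi_u^{n-2}\|+C(h^{k+1}+(\Delta t)^3).
\end{equation*}

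Then the induction from Theorem \ref{sta} is repeated almost verbatim. The $L^2$-projection initial data give $\|\xi_u^0\|,\|\xi_u^{-1}\|,\|\xi_v^0\|\le Ch^{k+1}$, using (\ref{projection111}) together with the argument culminating in (\ref{sta-1}). The base case $n=1$ follows the path (\ref{stan=1})--(\ref{lastn=1}) and delivers $\|\xi_u^1\|\le C(h^{k+1}+(\Delta t)^2)$, where the promotion $(\Delta t)^3\to(\Delta t)^2$ absorbs the number of time steps $M=T/\Delta t$ into $C$. An induction on $n$, exploiting the telescoping identity $\sum_{i=1}^{n-1}(b_{n-i-1}-b_{n-i})=1-b_{n-1}\le 1$ together with the monotonicity (\ref{b}) of the $b_i$, propagates the bound $\|\xi_u^n\|\le C(h^{k+1}+(\Delta t)^2)$ to all $n\le M$. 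Finally, $\|u(x,t_n)-u_h^n\|\le\|\xi_u^n\|+\|\eta_u^n\|$ with $\|\eta_u^n\|\le Ch^{k+1}$ yields (\ref{err}).

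The main obstacle, and the step requiring the most care, is controlling the accumulation of the per-step time-truncation residual $C(\Delta t)^3$ over the $M$ time levels: the induction closes only if the constant $C$ in the estimate is allowed to depend on $T$ (and hence on $M\Delta t$), so that $M\cdot C(\Delta t)^3\le CT(\Delta t)^2$ remains $O((\Delta t)^2)$. A discrete Gronwall argument, tailored to the nonnegative weights $b_{n-i-1}-b_{n-i}$, is what makes this rigorous, and one must check that the $\alpha$-dependent prefactors absorbed into $C$ stay bounded uniformly for $1<\alpha<2$. A secondary technicality, routine but necessary, is to verify the flux--projection compatibility that kills all interface contributions of $\eta_u^n$ and $\eta_p^n$ once $\mathcal{P}^\pm$ are paired with the fluxes in (\ref{flux1}); this must be invoked explicitly when reducing the error equation to the stability-type inequality above.
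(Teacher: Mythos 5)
Your proposal follows essentially the same route as the paper's proof: the same projection decomposition (your $\xi_u^n,\eta_u^n$ are, up to sign, the paper's $\mathcal{P}^-e_u^n$ and $\mathcal{P}^-u-u$), the same test-function choice $\phi=\xi_u^n$, $w=\beta\xi_p^n$ with the flux--projection cancellation, the same reduction to a stability-type recursion in $\|\xi_u^n\|$, and the same induction patterned on Theorem \ref{sta} followed by the triangle inequality and (\ref{projection111}). Your explicit remarks on the accumulation of the $O((\Delta t)^3)$ per-step residual and on the $(\Delta t)^2$ contribution from the starting value $u^{-1}$ are, if anything, more careful than the paper's treatment, but they do not constitute a different method.
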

\begin{proof} By Taylor expansion we know
$$|u(x,t_{-1})-u(x,0)+\Delta tu_1(x)|\leq C(\Delta t)^{2}, $$
here $C$ is a positive constant depending on $u$. Then by using the
property (\ref{projection111}), we can obtain the following estimate
which will be used later,
\begin{equation}
\|u(x,t_{-1})-u_h^{-1}\|\leq C((\Delta t)^{2}+h^{k+1}).
\end{equation}

It is easy to verify that the exact solution of PDE (\ref{question})
satisfies
\begin{equation}\label{weak}
\begin{split}
3 \int_{\Omega}u(x,t_{n})\phi dx&+\beta (\int_{\Omega}p(x,t_{n})\phi_xdx-\sum\limits_{j=1}^{N}((p(x,t_{n})\phi^-)_{j+\frac{1}{2}}-(p(x,t_{n})\phi^+)_{j-\frac{1}{2}}))\\
=&\sum\limits_{i=1}^{n-1}(b_{n-i-1}-b_{n-i})\int_{\Omega}(3u(x,t_{i})-4u(x,t_{i-1})+u(x,t_{i-2}))\phi dx\\
&+2\Delta t  b_{n-1}\int_{\Omega}v(x,t_{0})\phi dx+4 \int_{\Omega}u(x,t_{n-1})\phi dx\\
&- \int_{\Omega}u(x,t_{n-2})\phi dx+\beta\int_{\Omega}f(x,t_{n})\phi dx+\beta\int_{\Omega}r_3^n\phi dx,\\
\int_{\Omega}p(x,t_{n})wdx&+\int_{\Omega}u(x,t_{n})w_xdx-\sum\limits_{j=1}^{N}((u(x,t_{n})w^-)_{j+\frac{1}{2}}-(u(x,t_{n})w^+)_{j-\frac{1}{2}})=0,
\end{split}
\end{equation}
$\forall v,\eta\in H^1(I_j)$, for $j=1,\cdots N$.

Denote
\begin{equation}\label{errnotation}
\begin{split}
&e_u^n=u(x,t_n)-u_h^n=\mathcal{P^-}e_u^n-(\mathcal{P^-}u(x,t_n)-u(x,t_n)),\\
&e_p^n=p(x,t_n)-p_h^n=\mathcal{P^+}e_p^n-(\mathcal{P^+}p(x,t_n)-p(x,t_n)).
\end{split}
\end{equation}

Subtracting (\ref{scheme}) from (\ref{weak}), and with the fluxes
(\ref{flux1}) we can obtain the error equation:
\begin{equation}\label{err1}
\begin{split}
3 \int_{\Omega}e_u^n\phi dx&+\beta(\int_{\Omega}e_p^n\phi_xdx-\sum\limits_{j=1}^{N}(((e_p^n)^+\phi^-)_{j+\frac{1}{2}}-((e_p^n)^+\phi^+)_{j-\frac{1}{2}}))\\
-&\sum\limits_{i=1}^{n-1}(b_{n-i-1}-b_{n-i})\int_{\Omega}(3e_u^{i}-4e_u^{i-1}+e_u^{i-2})\phi dx\\
&-4 \int_{\Omega}e_u^{n-1}\phi dx\\
&+ \int_{\Omega}e_u^{n-2}\phi dx+\beta\int_{\Omega}r_3^n\phi dx+\int_{\Omega}e_p^nwdx+\int_{\Omega}e_u^nw_xdx\\
&-\sum\limits_{j=1}^{N}(((e_u^n)^-w^-)_{j+\frac{1}{2}}-((e_u^n)^-w^+)_{j-\frac{1}{2}})
=0.
\end{split}
\end{equation}

Using (\ref{errnotation}), the error equation (\ref{err1}) can be
written as follows:
\begin{equation*}
\begin{split}
3 \int_{\Omega}&\mathcal{P^-}e_u^n\phi dx+\beta(\int_{\Omega}\mathcal{P^+}e_p^n\phi_xdx-\sum\limits_{j=1}^{N}(((\mathcal{P^+}e_p^n)^+\phi^-)_{j+\frac{1}{2}}-((\mathcal{P^+}e_p^n)^+\phi^+)_{j-\frac{1}{2}}))\\
&+\int_{\Omega}\mathcal{P^+}e_p^nwdx+\int_{\Omega}\mathcal{P^-}e_u^nw_xdx
-\sum\limits_{j=1}^{N}(((\mathcal{P^-}e_u^n)^-w^-)_{j+\frac{1}{2}}-((\mathcal{P^-}e_u^n)^-w^+)_{j-\frac{1}{2}})\\
=&\sum\limits_{i=1}^{n-1}(b_{n-i-1}-b_{n-i})\int_{\Omega}(3\mathcal{P^-}e_u^{i}-4\mathcal{P^-}e_u^{i-1}+\mathcal{P^-}e_u^{i-2})\phi dx\\
&+4 \int_{\Omega}\mathcal{P^-}e_u^{n-1}\phi dx
- \int_{\Omega}\mathcal{P^-}e_u^{n-2}\phi dx-\beta\int_{\Omega}r_3^n\phi dx\\
\end{split}
\end{equation*}
\begin{equation}\label{err2}
\begin{split}
&-\sum\limits_{i=1}^{n-1}(b_{n-i-1}-b_{n-i})\int_{\Omega}(3(\mathcal{P^-}u(x,t_i)-u(x,t_i))-4(\mathcal{P^-}u(x,t_{i-1})-u(x,t_{i-1}))\\
&+(\mathcal{P^-}u(x,t_{i-2})-u(x,t_{i-2})))\phi dx-4 \int_{\Omega}(\mathcal{P^-}u(x,t_{n-1})-u(x,t_{n-1}))\phi dx\\
&+ \int_{\Omega}(\mathcal{P^-}u(x,t_{n-2})-u(x,t_{n-2}))\phi dx\\
&+3 \int_{\Omega}(\mathcal{P^-}u(x,t_n)-u(x,t_n))\phi dx+\beta(\int_{\Omega}(\mathcal{P^+}p(x,t_n)-p(x,t_n))\phi_xdx\\
&-\sum\limits_{j=1}^{N}((((\mathcal{P^+}p(x,t_n)-p(x,t_n)))^+\phi^-)_{j+\frac{1}{2}}-(((\mathcal{P^+}p(x,t_n)-p(x,t_n)))^+\phi^+)_{j-\frac{1}{2}}))\\
&+\int_{\Omega}(\mathcal{P^+}p(x,t_n)-p(x,t_n))wdx+\int_{\Omega}(\mathcal{P^-}u(x,t_n)-u(x,t_n))w_xdx\\
&-\sum\limits_{j=1}^{N}(((\mathcal{P^-}u(x,t_n)-u(x,t_n))^-w^-)_{j+\frac{1}{2}}-((\mathcal{P^-}u(x,t_n)-u(x,t_n))^-w^+)_{j-\frac{1}{2}}).
\end{split}
\end{equation}

Taking the test functions $\phi=\mathcal{P^-}e_u^n, w=\beta
\mathcal{P^+}e_p^n$ in (\ref{err2}), using the properties
(\ref{pro1})-(\ref{pro2}), then the following equality holds,
\begin{equation*}
\begin{split}
3\int_{\Omega}&(\mathcal{P^-}e_u^n)^2dx+\beta\int_{\Omega}(\mathcal{P^+}e_p^n)^2dx\\
=&\sum\limits_{i=1}^{n-1}(b_{n-i-1}-b_{n-i})\int_{\Omega}(3\mathcal{P^-}e_u^{i}-4\mathcal{P^-}e_u^{i-1}+\mathcal{P^-}e_u^{i-2})\mathcal{P^-}e_u^n dx\\
&+4\int_{\Omega}\mathcal{P^-}e_u^{n-1}\mathcal{P^-}e_u^n dx
-\int_{\Omega}\mathcal{P^-}e_u^{n-2}\mathcal{P^-}e_u^n dx\\
&-\sum\limits_{i=1}^{n-1}(b_{n-i-1}-b_{n-i})\int_{\Omega}(3(\mathcal{P^-}u(x,t_i)-u(x,t_i))-4(\mathcal{P^-}u(x,t_{i-1})\\
&-u(x,t_{i-1}))+(\mathcal{P^-}u(x,t_{i-2})-u(x,t_{i-2})))\mathcal{P^-}e_u^n dx-4\\
&\int_{\Omega}(\mathcal{P^-}u(x,t_{n-1})-u(x,t_{n-1}))\mathcal{P^-}e_u^n dx-\beta\int_{\Omega}r_3^n\mathcal{P^-}e_u^n dx\\
&+\int_{\Omega}(\mathcal{P^-}u(x,t_{n-2})-u(x,t_{n-2}))\mathcal{P^-}e_u^n dx\\
&+3\int_{\Omega}(\mathcal{P^-}u(x,t_n)-u(x,t_n))\mathcal{P^-}e_u^n dx\\
&+\beta\int_{\Omega}(\mathcal{P^+}p(x,t_n)-p(x,t_n))\mathcal{P^+}e_p^ndx.
\end{split}
\end{equation*}

Therefore, we obtain
\begin{equation*}
\begin{split}
\|\mathcal{P^-}e_u^n)\|
\leq&\sum\limits_{i=1}^{n-1}(b_{n-i-1}-b_{n-i})(3\|\mathcal{P^-}e_u^{i}\|+4\|\mathcal{P^-}e_u^{i-1}\|+\|\mathcal{P^-}e_u^{i-2}\|) \\
&+4\|\mathcal{P^-}e_u^{n-1}\|
+\|\mathcal{P^-}e_u^{n-2}\|+\beta\|r_3^n\|\\
&+\sum\limits_{i=1}^{n-1}(b_{n-i-1}-b_{n-i})(3\|\mathcal{P^-}u(x,t_i)-u(x,t_i)\|\\
&+4\|\mathcal{P^-}u(x,t_{i-1})-u(x,t_{i-1})\|+\|\mathcal{P^-}u(x,t_{i-2})-u(x,t_{i-2})\|)\\
\end{split}
\end{equation*}
\begin{equation}\label{err3}
\begin{split}
&+ \|\mathcal{P^-}u(x,t_{n-2})-u(x,t_{n-2})\|
+4 \|\mathcal{P^-}u(x,t_{n-1})-u(x,t_{n-1})\|\\
&+3 \|\mathcal{P^-}u(x,t_n)-u(x,t_n)\|+\sqrt{\beta}\|\mathcal{P^+}p(x,t_n)-p(x,t_n)\|.
\end{split}
\end{equation}

We prove the error estimates (\ref{err}) by mathematical induction.
First, we consider the case when $n=1$. From (\ref{err1}) and (\ref{err3}) , we know
\begin{equation}\label{err3}
\begin{split}
\|\mathcal{P^-}e_u^1)\|
\leq&4\|\mathcal{P^-}e_u^{0}\|
+\|e_u^{-1}\|+\beta\|r_3^1\|\\
&+4\|\mathcal{P^-}u(x,t_{0})-u(x,t_{0})\|\\
&+3\|\mathcal{P^-}u(x,t_1)-u(x,t_1)\|+\sqrt{\beta}\|\mathcal{P^+}p(x,t_1)-p(x,t_1)\|
\end{split}
\end{equation}

Notice the facts that $$\mathcal{P^-}e_u^{0}=0,~~~\|e_u^{-1}\|\leq
C(h^{k+1}+(\Delta t)^2),~~\|r_3^1\|\leq C(\Delta t)^{3-\alpha},$$ and the
property (\ref{projection111}), we can obtain
\begin{equation}\label{err4}
\begin{split}
&\|\mathcal{P^-}e_u^1\|\leq C(h^{k+1}+(\Delta t)^{2}).
\end{split}
\end{equation}

Next we suppose the following inequality holds
\begin{equation}\label{assump}\|\mathcal{P^-}e_u^m\|\leq
C(h^{k+1}+(\Delta t)^{2}),
m=1,2,\cdots K.
\end{equation}

When $n=K+1$, from the equation (\ref{err3}), we can obtain
\begin{equation*}
\begin{split}
\|\mathcal{P^-}e_u^{K+1})\|
\leq&\sum\limits_{i=1}^{K}(b_{K-i}-b_{K+1-i})(3\|\mathcal{P^-}e_u^{i}\|+4\|\mathcal{P^-}e_u^{i-1}\|+\|\mathcal{P^-}e_u^{i-2}\|) \\
&+4 \|\mathcal{P^-}e_u^{K}\|
+ \|\mathcal{P^-}e_u^{K-1}\|+\beta\|r_3^{K+1}\|\\
&+\sum\limits_{i=1}^{K}(b_{K-i}-b_{K+1-i})(3\|\mathcal{P^-}u(x,t_i)-u(x,t_i)\|+4\|\mathcal{P^-}u(x,t_{i-1})-u(x,t_{i-1})\|\\
&+\|\mathcal{P^-}u(x,t_{i-2})-u(x,t_{i-2})\|)+4 \|\mathcal{P^-}u(x,t_{K})-u(x,t_{K})\|\\
&+ \|\mathcal{P^-}u(x,t_{K-1})-u(x,t_{K-1})\|\\
&+3 \|\mathcal{P^-}u(x,t_{K+1})-u(x,t_{K+1})\|+\sqrt{\beta}\|\mathcal{P^+}p(x,t_{K+1})-p(x,t_{K+1})\|\\
\leq&\sum\limits_{i=1}^{K}(b_{K-i}-b_{K+1-i})C(h^{k+1}+(\Delta t)^{2}) \\
&+4 C(h^{k+1}+(\Delta t)^{2})
+ C(h^{k+1}+(\Delta t)^{2})+C(\Delta t)^{3}\\
&+\sum\limits_{i=1}^{K}(b_{K-i}-b_{K+1-i})Ch^{k+1}+Ch^{k+1}+C(\Delta t)^{\frac{\alpha}{2}}h^{k+1}.
\end{split}
\end{equation*}

Similar to the proof of (\ref{err4}), we can obtain the following
result immediately
$$\|\mathcal{P^-}e_u^{K+1}\|\leq C(h^{k+1}+(\Delta t)^{2}).$$

Thus Theorem \ref{errorresult} follows by the triangle inequality
and the interpolation property (\ref{projection111}).
\end{proof}
\section{Numerical examples}
In this section, we present numerical experiments of the presented finite difference/local discontinuous Galerkin method to the fractional diffusion-wave equation to verify the error estimates in Section 3.

\textbf{Example 4.1.}  Consider the following fractional diffusion-wave equation
\begin{equation} \label{ex1}
\begin{split}
&\frac{\partial^{\alpha} u(x,t)}{\partial
t^{\alpha}}-\frac{\partial^{2}u(x,t)}{\partial x^{2}}=f(x,t),
~~~~~~(x,t)\in [0,1]\times [0,1],\\
&u(x,0)=0,~~~~\frac{\partial u(x,0)}{\partial
t}=0,~~x\in [0,1]. \\
\end{split}
\end{equation}

Choose a suitable right-hand-side function $f(x,t)$ such that the exact solution is
$$u(x,t)=t^2\sin (2\pi x)$$.
Table 1-4 display the errors in $L^2$-norm and
$L^\infty$-norm at $T=1$ and convergence orders in space for piecewise $P^k$ polynomials for several values of $\alpha:1.2,1.4, 1.6, 1.8$, with time step $\Delta t=1/1000$. Obviously the $(k+1)$-th order of accuracy in space are observed, which is in agreement with the theoretic results.

In order to investigate the temporal accuracy of the proposed method, we fix the space step $h=1/200$. Table 5 show that the errors in $L^2$-norm and
$L^1$-norm attain the second-order convergence in time. The results are consistent with our theoretical results in Theorem \ref{errorresult}.

In Figure \ref{figure} we plot the the approximate solution of the three order on the uniform mesh with $100$ cells and the exact solution at $T=1$ to show the performance of the presented scheme. We can see that the method is very effective and is a good tool to solve such problems.

\begin{table}[htpb]
\caption{Spatial accuracy test for the time-fractional diffusion-wave equation
(\ref{ex1}) using piecewise $P^k$ polynomials. $\alpha=1.2, \Delta t=\frac{1}{1000}, T=1$. } \centering
\begin{tabular}{|c|c|c|c|c|c|c|c|}
 \hline
   & N& $L^2$-error & order & $L^\infty$-error & order\\
\hline
& 5&0.265109983989909 & - &0.623532065154133& -\\
& 10&0.129308265170869& 1.03 &0.313595441923762 &0.98 \\
$P^0$ & 20&6.425848539792525E-002& 1.01  &0.157010259108059 &1.00 \\
& 40&3.208010396964848E-002&1.00 &7.853125916762804E-002 & 1.00 \\
& 80&1.603391954016398E-002&1.00 &3.926891940062052E-002& 1.00\\
  \hline
  & 5&6.736979744152280E-002& - &0.249880240379731& -\\
& 10&1.695495641564284E-002&1.99 &6.468476942047330E-002&1.95 \\
$P^1$ & 20&4.245128618901225E-003& 2.00 &1.631233665625631E-002 &1.99  \\
& 40&1.061671545606701E-003&2.00&4.103801542149732E-003 & 1.99\\
& 80&2.654420510184321E-004&2.00&1.027598756713433E-003 & 2.00\\
  \hline
  & 5&6.682959934132981E-003 & - &3.174066978350254E-002& -\\
& 10&8.506996364720942E-004&2.97   &3.971254358826398E-003& 3.00\\
$P^2$ & 20&1.068204883018595E-004&2.99 & 5.116352220441455E-004& 2.96  \\
& 40&1.336779544959365E-005& 3.00&6.443554411463790E-005&2.99 \\
& 80&1.672333408099435E-006& 3.00&8.069512789853388E-006&3.00 \\
  \hline
\end{tabular}
\end{table}

\begin{table}[htpb]
\caption{Spatial accuracy test forthe time-fractional diffusion-wave equation
(\ref{ex1}) using piecewise $P^k$ polynomials. $\alpha=1.4, \Delta t=\frac{1}{1000}, T=1$. } \centering
\begin{tabular}{|c|c|c|c|c|c|c|c|}
 \hline
   & N& $L^2$-error & order & $L^\infty$-error & order\\
\hline
& 5&0.265002133818103& - &0.623263433950011& -\\
& 10&0.129296240434182& 1.03 &0.313564577675057&0.98 \\
$P^0$ & 20&6.425702562008787E-002& 1.01  &0.157006484631294 &1.00 \\
& 40&3.207992382929516E-002&1.00 &7.853079254830535E-002& 1.00 \\
& 80&1.603389759999361E-002&1.00 &3.926886254925913E-002& 1.00\\
  \hline
  & 5&6.736429618528465E-002& - &0.249848519443165& -\\
& 10&1.695467033281010E-002&1.99 & 6.468306250704692E-002&1.95 \\
$P^1$ & 20&4.245111564609582E-003& 2.00 &1.631223051257358E-002 &1.99  \\
& 40&1.061670492069727E-003&2.00&4.103794461966181E-003& 1.99\\
& 80&2.654419852360282E-004&2.00&1.027597816101400E-003 & 2.00\\
  \hline
  & 5&6.682553306760190E-003 & - &3.173870305522863E-002& -\\
& 10&8.506873719172307E-004&2.97   &3.971185742546351E-003& 3.00\\
$P^2$ & 20&1.068201082305646E-004&2.99 &5.116330685330400E-004& 2.96  \\
& 40&1.336778230870020E-005& 3.00&6.443547673697195E-005&2.99 \\
& 80&1.672322693664752E-006& 3.00& 8.069510669296185E-006&3.00 \\
  \hline
\end{tabular}
\end{table}

\begin{table}[htpb]
\caption{Spatial accuracy test forthe time-fractional diffusion-wave equation
(\ref{ex1}) using piecewise $P^k$ polynomials. $\alpha=1.6, \Delta t=\frac{1}{1000}, T=1$. } \centering
\begin{tabular}{|c|c|c|c|c|c|c|c|}
 \hline
   & N& $L^2$-error & order & $L^\infty$-error & order\\
\hline
& 5&0.264972688983567& - &0.623189941325662& -\\
& 10&0.129291634839670& 1.03 &0.313552730827037&0.98 \\
$P^0$ & 20&6.425644681449535E-002& 1.01  &0.157004984816329  &1.00 \\
& 40&3.207985524403432E-002&1.00 &7.853061451680375E-002& 1.00 \\
& 80&1.603389101024813E-002&1.00 &3.926884544112283E-002& 1.00\\
  \hline
  & 5&6.736317907360344E-002& - &0.249838735643862& -\\
& 10&1.695461845352325E-002&1.99 & 6.468254410369267E-002&1.95 \\
$P^1$ & 20&4.245108546494545E-003& 2.00 &1.631218386012478E-002&1.99  \\
& 40&1.061670300965039E-003&2.00&4.103777047141932E-003& 1.99\\
& 80&2.654419703069185E-004&2.00&1.027582241539760E-003& 2.00\\
  \hline
  & 5&6.682478515070153E-003 & - &3.173831828892504E-002& -\\
& 10&8.506852141270998E-004&2.97   &3.971173620498153E-003& 3.00\\
$P^2$ & 20&1.068200376865845E-004&2.99 &5.116326863045966E-004& 2.96  \\
& 40&1.336774817234452E-005& 3.00&6.443546414282400E-005&2.99 \\
& 80&1.672066664180233E-006& 3.00&8.069510170348080E-006&3.00 \\
  \hline
\end{tabular}
\end{table}

\begin{table}[htpb]
\caption{Spatial accuracy test for the time-fractional diffusion-wave equation
(\ref{ex1}) using piecewise $P^k$ polynomials, $\alpha=1.8, \Delta t=\frac{1}{1000}, T=1$. } \centering
\begin{tabular}{|c|c|c|c|c|c|c|c|}
 \hline
   & N& $L^2$-error & order & $L^\infty$-error & order\\
\hline
& 5&0.265395156138938& - &0.624238212597400& -\\
& 10&0.129328509614864& 1.03 &0.313647188207608&0.98 \\
$P^0$ & 20&6.426055758264328E-002& 1.01  &0.157015597179358 &1.00 \\
& 40&3.208037034587025E-002&1.00 &7.853194655385835E-002& 1.00 \\
& 80&1.603396389418712E-002&1.00 &3.926903380844882E-002& 1.00\\
  \hline
  & 5&6.736868534645306E-002& - &0.249898961167015& -\\
& 10&1.695486972560303E-002&1.99 & 6.468562726677418E-002&1.95 \\
$P^1$ & 20&4.245123063266223E-003& 2.00 &1.631242414011957E-002&1.99  \\
& 40&1.061671210932901E-003&2.00&4.103841431837951E-003& 1.99\\
& 80&2.654420391246725E-004&2.00& 1.027635651727810E-003& 2.00\\
  \hline
  & 5&6.682807553983295E-003 & - &3.174004623627680E-002& -\\
& 10&8.506954178648776E-004&2.97   &3.971230808741821E-003& 3.00\\
$P^2$ & 20&1.068203742828856E-004&2.99 &5.116345075529710E-004& 2.96  \\
& 40&1.336790404833714E-005& 3.00&6.443552350912754E-005&2.99 \\
& 80&1.673232029466757E-006& 3.00&8.070278651961527E-006&3.00 \\
  \hline
\end{tabular}
\end{table}

\begin{table}[htpb]
\caption{Temporal accuracy test for the time-fractional diffusion-wave equation
(\ref{ex1}) using piecewise $P^2$ polynomials. $N=200$} \centering
\begin{tabular}{|c|c|c|c|c|c|c|c|}
 \hline
   & $\Delta t$ & $L^2$-error & order & $L^1$-error & order\\
\hline
& 0.05&2.315895458864733E-006&-&2.078028449959208E-006 & - \\
& 0.04&1.452896981500066E-006 & 2.09 &1.301714631618507E-006& 2.10\\
$\alpha=1.1$ & 0.03&8.051939547113251E-007& 2.05 &7.198086134684791E-007&2.06\\
& 0.02&3.394311162501219E-007& 2.13  &3.023980936139506E-007 &2.14 \\
  \hline
  & 0.05&1.550654829179151E-004&-&1.396164504933172E-004& - \\
& 0.04&9.400505980906901E-005 & 2.24 &8.464278635512412E-005& 2.24\\
$\alpha=1.8$ & 0.03&5.271466986243190E-005& 2.01&4.746819651138549E-005&2.01\\
& 0.02&2.314548304581403E-005& 2.03  &2.101160952907237E-005&2.01 \\
  \hline
\end{tabular}
\end{table}

\begin{figure}[htpb]
\centering
\includegraphics[width=0.6\textwidth]{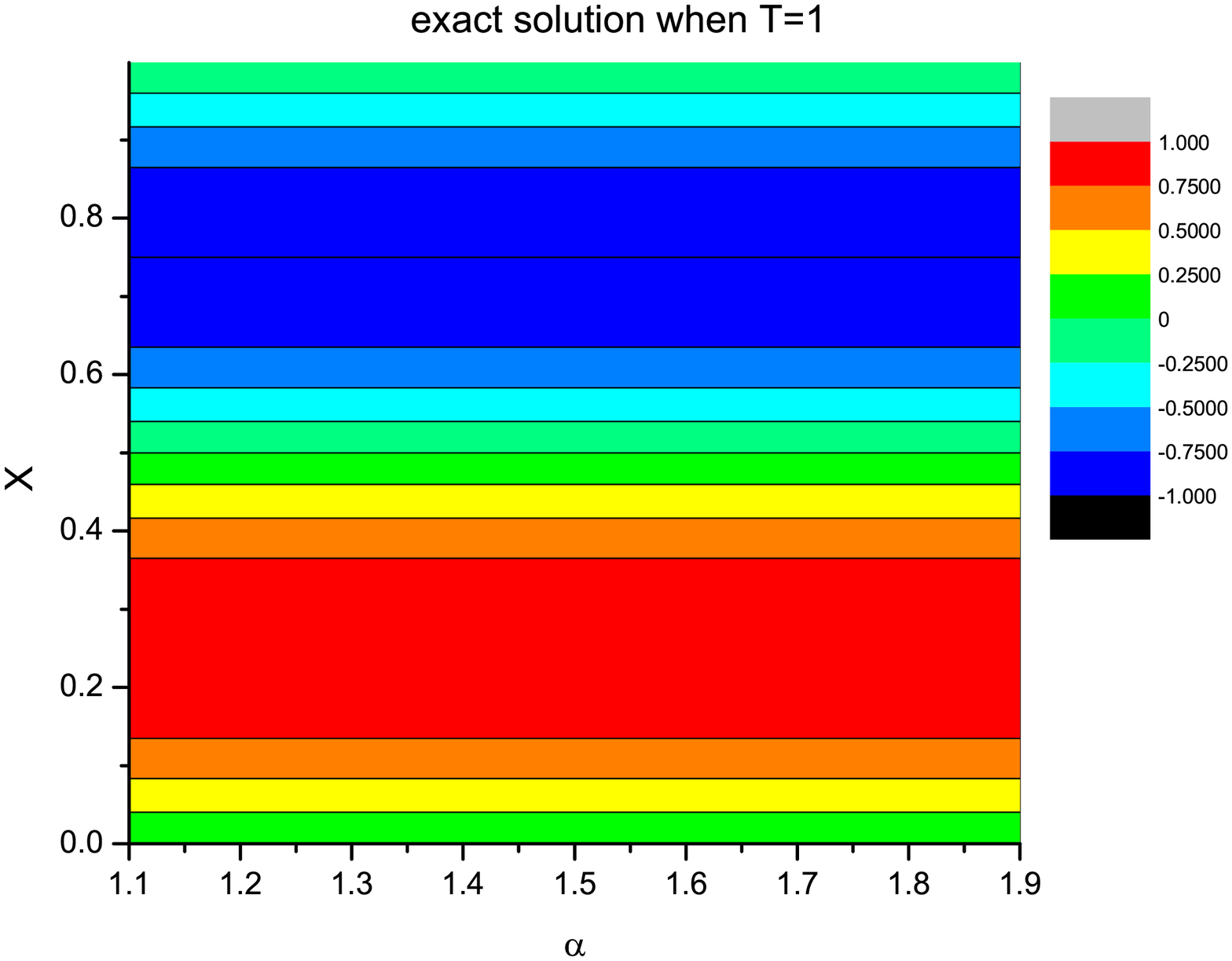}
\includegraphics[width=0.6\textwidth]{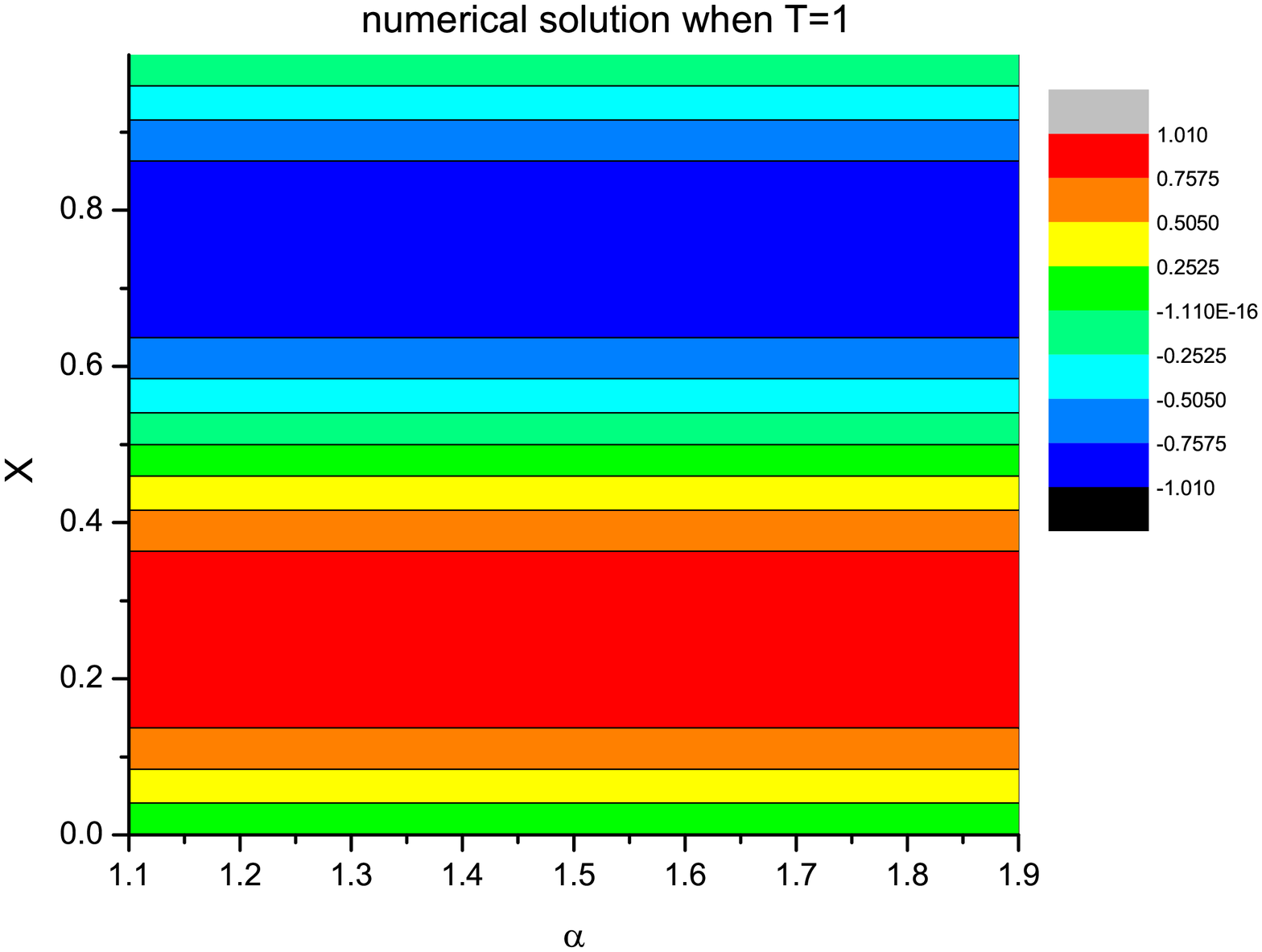}
\caption{The exact solution is contrasted against approximate
solution obtained on the uniform mesh with $100$ cells using $P^2$ elements
for Eq.(\ref{question}) when
T=1.}\label{figure}
\end{figure}

\section{Conclusion}

In this work, we have presented a finite difference/local discontinuous
Galerkin method for the fractional diffusion-wave equation. We first propose a finite difference method to approximate the time fractional derivatives when $1<\alpha<2$, and then give a fully discrete scheme and prove that the scheme is unconditionally stable and convergent. The extensive numerical example fully confirm the theoretic analysis.

\section*{Acknowledgement}
This work is supported by the High-Level Personal Foundation of Henan University of Technology (2013BS041), Plan For Scientific Innovation Talent of
Henan University of Technology (2013CXRC12), and the National Natural Science Foundation of China, Tian Yuan Special Foundation (11426090),and China Postdoctoral Science Foundation funded project (2015M572115).


\begin{thebibliography}{99}


\bibitem{op07} O. P. Agrawal, Analytical schemes for a new class of fractional differential equations, J. Phys. A: Math. Theor. 40 (2007), 5469.

\bibitem{fd1}  T. S. Basu and H. Wang, A fast second-order finite difference method for space-fractional
diffusion equations, Int. J. Numer. Anal. Modeling, 9 (2012), pp. 658-666.

\bibitem{sp1} A. R. Carella and C. A. Dorao, Least-squares spectral method for the solution of a fractional
advection-dispersion equation, J. Comput. Phys., 232 (2013) pp. 33-45.

\bibitem{chen08} C. Chen, F. Lui, K. Burrage, Finite difference methods
and a fourier analysis for the fractional reaction-subdiffusion
equation. Appl. Math. Comput. 198 (2008), pp. 754-769.

\bibitem{cs982} B. Cockburn, C.-W. Shu, The local discontinuous Galerkin method for time-dependent convection-diffusion systems, SIAM J. Numer. Anal. 35 (1998), pp. 2440-2463.

\bibitem{fd2} M. R. Cui, Compact finite difference method for the fractional diffusion equation, J. Comput.
Phys., 228 (2009), pp. 7792-7804.

\bibitem{ca97}  A. Compte, R.Metzler. The generalized Cattaneo equation for the description of anomalous transport
processes. J. Phys. A: Math. Gen. 30 (1997), pp. 7277-7289.

\bibitem{fe5}  W. Deng. Finite element method for the space and time fractional Fokker-Planck
equation. SIAM J. Numer. Anal. 47 (2008), pp. 204-226.

\bibitem{fd3} H. F. Ding and C. P. Li, Mixed spline function method for reaction-subdiffusion equation,
J. Comput. Phys., 242 (2013), pp. 103-123.

\bibitem{DC10} R. Du, W.R. Cao, Z.Z. Sun, A compact difference scheme for the
fractional diffusion-wave equation, Appl. Math. Model., 34 (2010),
pp. 2998-3007.

\bibitem{fe1}  V. J. Ervin, N. Heuer, and J. P. Roop, Numerical approximation of a time dependent,
nonlinear, space-fractional diffusion equation, SIAM J. Numer. Anal., 45 (2007), pp. 572-
591.


\bibitem{fe6} G. Fix, J. Roop, Least squares finite element solution of
a fractional order two-point boundary value problem, Comput. Math.
Appl. 48 (2004), pp. 1017-1033.

\bibitem{fd4} G. H. Gao and Z. Z. Sun, A compact finite difference scheme for the fractional sub-diffusion
equations, J. Comput. Phys., 230 (2011), pp. 586-595.

\bibitem{ot5} J.H. He and X.H. Wu. Variational iteration method: New development and applications,  Comput. Math. Appl., 54 (2007),pp. 881-894.

\bibitem{rh00} R. Hilfer, Ed., Applications of Fractional Calculus in Physics, World Scientific, Singapore, 2000.

\bibitem{fe7}Y. Jiang, J. Ma, High-order finite element methods for time-fractional partial
differential equations, J. Comput. Appl. Math. 235 (2011), pp.
3285-3290.

\bibitem{fe4} B. Jin, R.Lazarov, Y. Liu, Z.Zhou. The Galerkin finite element method for a multi-term
time-fractional diffusion equation. J. Comput. Phys., 281 (2015), pp. 825-843.


\bibitem{ks06} A. A. Kilbas, H. M. Srivastava, and J. J. Trujillo, Theory and Applications of Fractional Differential Equations, vol. 204, Elsevier, Amsterdam, The Netherlands, 2006.

\bibitem{rg08} R. Klages, G. Radons, and I. M. Sokolov, Eds., Anomalous Transport: Foundations and
Applications,Elsevier, Amsterdam, The Netherlands, 2008.

\bibitem{kti09} T.Kosztolowicz, K.D. Lewandowska. Hyperbolic subdiffusive impedance. J. Phys. A:Math. Theor. 42 (2009),
055004.

\bibitem{lk08}  K.D.Lewandowskaw. Application of generalized Cattaneo equation to model subdiffusion impedance.
Acta. Phys. Polonica. B. 39 (2008),  pp. 1211-1220.

\bibitem{fd5} T. A. M. Langlands and B. I. Henry, The accuracy and stability of an implicit solution
method for the fractional diffusion equation, J. Comput. Phys., 205 (2005), pp. 719-736.

\bibitem{sp2}X. J. Li and C. J. Xu, A space-time spectral method for the time fractional diffusion equation,
SIAM J. Numer. Anal., 47 (2009), pp. 2108-2131.

\bibitem{sp3}Y. M. Lin and C. J. Xu, Finite difference/spectral approximations for the time-fractional
diffusion equation, J. Comput. Phys., 225 (2007), pp. 1533-1552.

\bibitem{fd6} C. P. Li and F. H. Zeng, The finite difference methods for fractional ordinary differential
equations, Numer. Funct. Anal. Optim., 34 (2013), pp. 149-179.

\bibitem{fd7} F. Liu, P. Zhuang, and K. Burrage, Numerical methods and analysis for a class of fractional
advection-dispersion models, Comput. Math. Appl., 64 (2012), pp. 2990-3007.

\bibitem{fd8} M. M. Meerschaert and C. Tadjeran, Finite difference approximations for fractional
advection-dispersion, J. Comput. Appl. Math., 172 (2004), pp. 65-77.

\bibitem{mr98} R. Metzler,  T.F.Nonnenmacher. Fractional diffusion, waiting-time distributions, and Cattaneo-type
equations. Phys. Rev. E. 57 (1998), pp.  6409-6414.

\bibitem{ot1}  S. Momani and Z. Odibat, Comparison between the homotopy perturbation method and the
variational iteration method for linear fractional partial differential equations, Comput.
Math. Appl., 54 (2007), pp. 910-919.

\bibitem{AMO08} D. Murio, Implicite finite difference approximation for
time fractional diffusion equations, Comput. Math. Appl. 56 (2008)
1138-1145.

\bibitem{IP99} I. Podlubny, Fractional Differential Equations, vol. 198, Academic Press, San Diego,Calif, USA,1999.

\bibitem{ot2}I. Podlubny, A. Chechkin, T. Skovranek, Y. Q. Chen, and B. M. V. Jara, Matrix approach
to discrete fractional calculus II: Partial fractional differential equations, J. Comput. Phys.,
228 (2009), pp. 3137-3153.

\bibitem{fe2} J. P. Roop, Computational aspects of FEM approximation of fractional advection
dispersion equations on bounded domains in $R^2$, J. Comput. Appl. Math., 193 (2006), pp. 243-268.


\bibitem{ss07} S. S. Ray, Exact solutions for time-fractional diffusion-wave equations by decomposition method, Phys. Scr., 75(2007), 53.

\bibitem{lx} L. Shao, X. Feng, Y. He, The local discontinuous Galerkin finite element method for Burger's equation.
  Math. Comput. Modelling, 54 (2011), pp. 2943-2954.

  \bibitem{ot3} E. Sousa, A second order explicit finite difference method for the fractional advection
diffusion equation, Comput. Math. Appl., 64 (2012), pp. 3141-3152.

\bibitem{fd9} H. Wang, K. X. Wang, and T. Sircar, A direct $O(N log_2 N)$ finite difference method for
fractional diffusion equations, J. Comput. Phys., 229 (2010), pp. 8095-8104.

\bibitem{fd10}K. Wang and H. Wang, A fast characteristic finite difference method for fractional
advection-diffusion equations, Adv. Water Resour., 34 (2011), pp. 810-816.

\bibitem{wz08} L. Wang,  X. Zhou and X. Wei. Heat Conduction, Berlin:
Springer, 2008 .

\bibitem{xia}  Y. Xia, Y. Xu and C.-W. Shu, Application of the local discontinuous Galerkin method for the Allen-Cahn/Cahn-Hilliard system, Commun. Comput. Phys. 5 (2009),pp. 821-835.

\bibitem{xushu08} Y. Xu, C.-W. Shu, Local discontinuous Galerkin method for the
Camassa-Holm equation, SIAM J. Numer. Anal. 46 (2008), pp.
1998-2021.

\bibitem{jan1} Q. Xu, J.S. Hesthaven, Discontinuous Galerkin Method for Fractional Convection-Diffusion Equations. SIAM J. Numer. Anal.  52  (2014), pp. 405-423.

\bibitem{jan2} Q. Xu, J.S. Hesthaven, F. Chen,
A parareal method for time-fractional differential equations. J. Comput. Physics 293 (2015), pp. 173-183.


\bibitem{ot4}Q. Q. Yang, I. Turner, F. Liu, and M. Ilic, Novel numerical methods for solving the timespace
fractional diffusion equation in two dimensions, SIAM J. Sci. Comput., 33 (2011),
pp. 1159-1180.

\bibitem{Y10} A. Yildirim, He's homtopy perturbation method for solving the
space and time fractional telegraph equations, Int. J. Comput. Math.
87 (2010) 2998-3006.

\bibitem{fd11} S. B.Yuste, Weighted average finite difference methods for fractional diffusion equations,
J. Comput. Phys., 216 (2006), pp. 264-274.

\bibitem{fd12} P. Zhuang, F. Liu, V. Anh, and I. Turner, New solution and analytical techniques of the
implicit numerical method for the anomalous subdiffusion equation, SIAM J. Numer. Anal.,
46 (2008), pp. 1079-1095.

\bibitem{zhang} X. Zhang, B. Tang and  Y. He, Homotopy analysis method for higher-order fractional integro-differential equations, Comput. Math. Appl., 62(2011), pp. 3194-3203.

\bibitem{fd13} X. Zhao, Z.Z.Sun. Compact Crank-Nicolson Schemes for a Class
of Fractional Cattaneo Equation in Inhomogeneous
Medium. J. Sci. Comput. 62 (2015), pp. 747-771.

\bibitem{fe3} Y. Y. Zheng, C. P. Li, and Z. G. Zhao, A note on the finite element method for the
space fractional advection diffusion equation, Comput. Math. Appl., 59 (2010), pp. 1718-1726.

\bibitem{wei1}L.L. Wei, Y.N. He, Analysis of the fractional Kawahara equation using an implicit fully discrete local discontinuous Galerkin method, Numer. Methods Partial Differential Eq., 29 (2013), pp. 1441-1458.

\bibitem{wei2}L.L. Wei, Y.N. He, Analysis of a fully discrete local discontinuous Galerkin method for time-fractional fourth-order problems. Appl. Math. Model., 38 (2014), pp. 1511-1522.
\end{thebibliography}
\end{document}